
\documentclass[12pt]{amsart}
\usepackage{amsmath,amsthm,amsfonts,amssymb,latexsym,enumerate,url,hyperref} 

\headheight=7pt
\textheight=574pt
\textwidth=432pt
\topmargin=14pt
\oddsidemargin=18pt
\evensidemargin=18pt

\begin{document}

\theoremstyle{plain}

\newtheorem{thm}{Theorem}[section]
\newtheorem{lem}[thm]{Lemma}
\newtheorem{conj}[thm]{Conjecture}
\newtheorem{pro}[thm]{Proposition}
\newtheorem{cor}[thm]{Corollary}
\newtheorem{que}[thm]{Question}
\newtheorem{rem}[thm]{Remark}
\newtheorem{defi}[thm]{Definition}
\newtheorem{hyp}[thm]{Hypothesis}

\newtheorem*{thmA}{THEOREM A}
\newtheorem*{corB}{COROLLARY B}

\newtheorem*{thmC}{THEOREM C}
\newtheorem*{conjA}{CONJECTURE A}
\newtheorem*{conjB}{CONJECTURE B}
\newtheorem*{conjC}{CONJECTURE C}

\newtheorem*{thmAcl}{Main Theorem$^{*}$}
\newtheorem*{thmBcl}{Theorem B$^{*}$}

\numberwithin{equation}{section}

\newcommand{\Maxn}{\operatorname{Max_{\textbf{N}}}}
\newcommand{\Syl}{\operatorname{Syl}}
\newcommand{\dl}{\operatorname{\mathfrak{d}}}
\newcommand{\Con}{\operatorname{Con}}
\newcommand{\cl}{\operatorname{cl}}
\newcommand{\Stab}{\operatorname{Stab}}
\newcommand{\Aut}{\operatorname{Aut}}
\newcommand{\Ker}{\operatorname{Ker}}
\newcommand{\IBr}{\operatorname{IBr}}
\newcommand{\Irr}{\operatorname{Irr}}
\newcommand{\SL}{\operatorname{SL}}
\newcommand{\FF}{\mathbb{F}}
\newcommand{\NN}{\mathbb{N}}
\newcommand{\N}{\mathbf{N}}
\newcommand{\C}{\mathbf{C}}
\newcommand{\OO}{\mathbf{O}}
\newcommand{\F}{\mathbf{F}}
\newcommand{\aplii}[4]{\begin{array}{ccc}
	#1 &\longrightarrow & #2\\
	#3 &\longmapsto & #4
	\end{array}}

\renewcommand{\labelenumi}{\upshape (\roman{enumi})}

\newcommand{\GL}{\operatorname{GL}}
\newcommand{\Sp}{\operatorname{Sp}}
\newcommand{\PGL}{\operatorname{PGL}}
\newcommand{\PSL}{\operatorname{PSL}}
\newcommand{\SU}{\operatorname{SU}}
\newcommand{\PSU}{\operatorname{PSU}}
\newcommand{\PSp}{\operatorname{PSp}}

\providecommand{\V}{\mathrm{V}}
\providecommand{\E}{\mathrm{E}}
\providecommand{\ir}{\mathrm{Irr_{rv}}}
\providecommand{\Irrr}{\mathrm{Irr_{rv}}}
\providecommand{\re}{\mathrm{Re}}

\def\irrp#1{{\rm Irr}_{p'}(#1)}
\def\ibrp#1{{\rm IBr}_{p'}(#1)}
\def\lin#1{{\rm Lin}(#1)}

\def\Z{{\mathbb Z}}
\def\C{{\mathbb C}}
\def\Q{{\mathbb Q}}
\def\irr#1{{\rm Irr}(#1)}
\def\ibr#1{{\rm IBr}(#1)}
\def\irra#1{{\rm Irr}_{\rm A}(#1)}
\def\ibra#1{{\rm IBr}_{\rm A}(#1)}
\def \c#1{{\cal #1}}
\def\cent#1#2{{\bf C}_{#1}(#2)}
\def\syl#1#2{{\rm Syl}_#1(#2)}
\def\nor{\trianglelefteq\,}
\def\oh#1#2{{\bf O}_{#1}(#2)}
\def\Oh#1#2{{\bf O}^{#1}(#2)}
\def\zent#1{{\bf Z}(#1)}
\def\det#1{{\rm det}(#1)}
\def\ker#1{{\rm ker}(#1)}
\def\norm#1#2{{\bf N}_{#1}(#2)}
\def\alt#1{{\rm Alt}(#1)}
\def\iitem#1{\goodbreak\par\noindent{\bf #1}}
   \def \mod#1{\, {\rm mod} \, #1 \, }
\def\sbs{\subseteq}

\def\gc{{\bf GC}}
\def\ngc{{non-{\bf GC}}}
\def\ngcs{{non-{\bf GC}$^*$}}
\newcommand{\notd}{{\!\not{|}}}
\newcommand{\Out}{{\mathrm {Out}}}
\newcommand{\Mult}{{\mathrm {Mult}}}
\newcommand{\Inn}{{\mathrm {Inn}}}
\newcommand{\IBR}{{\mathrm {IBr}}}
\newcommand{\IBRL}{{\mathrm {IBr}}_{\ell}}
\newcommand{\IBRP}{{\mathrm {IBr}}_{p}}
\newcommand{\ord}{{\mathrm {ord}}}
\def\id{\mathop{\mathrm{ id}}\nolimits}
\renewcommand{\Im}{{\mathrm {Im}}}
\newcommand{\Ind}{{\mathrm {Ind}}}
\newcommand{\diag}{{\mathrm {diag}}}
\newcommand{\soc}{{\mathrm {soc}}}
\newcommand{\End}{{\mathrm {End}}}
\newcommand{\sol}{{\mathrm {sol}}}
\newcommand{\Hom}{{\mathrm {Hom}}}
\newcommand{\Mor}{{\mathrm {Mor}}}
\newcommand{\St}{{\sf {St}}}
\def\rank{\mathop{\mathrm{ rank}}\nolimits}
\newcommand{\Tr}{{\mathrm {Tr}}}
\newcommand{\tr}{{\mathrm {tr}}}
\newcommand{\Gal}{{\it Gal}}
\newcommand{\Spec}{{\mathrm {Spec}}}
\newcommand{\ad}{{\mathrm {ad}}}
\newcommand{\Sym}{{\mathrm {Sym}}}
\newcommand{\Char}{{\mathrm {char}}}
\newcommand{\pr}{{\mathrm {pr}}}
\newcommand{\rad}{{\mathrm {rad}}}
\newcommand{\abel}{{\mathrm {abel}}}
\newcommand{\codim}{{\mathrm {codim}}}
\newcommand{\ind}{{\mathrm {ind}}}
\newcommand{\Res}{{\mathrm {Res}}}
\newcommand{\Ann}{{\mathrm {Ann}}}
\newcommand{\Ext}{{\mathrm {Ext}}}
\newcommand{\Alt}{{\mathrm {Alt}}}
\newcommand{\AAA}{{\sf A}}
\newcommand{\SSS}{{\sf S}}
\newcommand{\CC}{{\mathbb C}}
\newcommand{\CB}{{\mathbf C}}
\newcommand{\RR}{{\mathbb R}}
\newcommand{\QQ}{{\mathbb Q}}
\newcommand{\ZZ}{{\mathbb Z}}
\newcommand{\KK}{{\mathbb K}}
\newcommand{\NB}{{\mathbf N}}
\newcommand{\ZB}{{\mathbf Z}}
\newcommand{\OB}{{\mathbf O}}
\newcommand{\EE}{{\mathbb E}}
\newcommand{\PP}{{\mathbb P}}
\newcommand{\GC}{{\mathcal G}}
\newcommand{\HC}{{\mathcal H}}
\newcommand{\AC}{{\mathcal A}}
\newcommand{\BC}{{\mathcal B}}
\newcommand{\GA}{{\mathfrak G}}
\newcommand{\SC}{{\mathcal S}}
\newcommand{\TC}{{\mathcal T}}
\newcommand{\DC}{{\mathcal D}}
\newcommand{\LC}{{\mathcal L}}
\newcommand{\RC}{{\mathcal R}}
\newcommand{\CL}{{\mathcal C}}
\newcommand{\EC}{{\mathcal E}}
\newcommand{\GCD}{\GC^{*}}
\newcommand{\TCD}{\TC^{*}}
\newcommand{\FD}{F^{*}}
\newcommand{\GD}{G^{*}}
\newcommand{\HD}{H^{*}}
\newcommand{\hG}{\hat{G}}
\newcommand{\hP}{\hat{P}}
\newcommand{\hQ}{\hat{Q}}
\newcommand{\hR}{\hat{R}}
\newcommand{\GCF}{\GC^{F}}
\newcommand{\TCF}{\TC^{F}}
\newcommand{\PCF}{\PC^{F}}
\newcommand{\GCDF}{(\GC^{*})^{F^{*}}}
\newcommand{\RGTT}{R^{\GC}_{\TC}(\theta)}
\newcommand{\RGTA}{R^{\GC}_{\TC}(1)}
\newcommand{\Om}{\Omega}
\newcommand{\eps}{\epsilon}
\newcommand{\varep}{\varepsilon}
\newcommand{\al}{\alpha}
\newcommand{\chis}{\chi_{s}}
\newcommand{\sigmad}{\sigma^{*}}
\newcommand{\PA}{\boldsymbol{\alpha}}
\newcommand{\gam}{\gamma}
\newcommand{\lam}{\lambda}
\newcommand{\la}{\langle}
\newcommand{\ra}{\rangle}
\newcommand{\hs}{\hat{s}}
\newcommand{\htt}{\hat{t}}
\newcommand{\sgn}{\mathsf{sgn}}
\newcommand{\SR}{^*R}
\newcommand{\tn}{\hspace{0.5mm}^{t}\hspace*{-0.2mm}}
\newcommand{\ta}{\hspace{0.5mm}^{2}\hspace*{-0.2mm}}
\newcommand{\tb}{\hspace{0.5mm}^{3}\hspace*{-0.2mm}}
\def\skipa{\vspace{-1.5mm} & \vspace{-1.5mm} & \vspace{-1.5mm}\\}
\newcommand{\tw}[1]{{}^#1\!}
\renewcommand{\mod}{\bmod \,}

\newcommand{\Irre}[1]{{\rm Irr}(#1)}
\newcommand{\Irra}[2]{{\rm Irr}_{#1}(#2)}
\renewcommand{\N}[2]{{{\bf N}_{#1}(#2)}}
\def\st{\text{ }|\text{ }}
\newcommand{\inv}[1]{{#1}^{-1}}

\marginparsep-0.5cm

\renewcommand{\thefootnote}{\fnsymbol{footnote}}
\footnotesep6.5pt

\title{McKay Bijections and Decomposition Numbers}
\author{David Cabrera-Berenguer}
\address{Departament de Matem\`atiques, Universitat de Val\`encia, 46100 Burjassot,
Val\`encia, Spain}
\email{david.cabrera@uv.es}

\thanks{This research is supported by Grant PID2022-137612NB-I00
 funded by MCIN/AEI/ 10.13039/501100011033 and ERDF ``A way of making Europe”. The author would like to thank
Gabriel Navarro, J. M. Martínez and Noelia Rizo for useful conversations on the subject.
}

\keywords{}

\subjclass[2010]{Primary 20C20; Secondary 20C15}

\begin{abstract}
If $G$ is $p$-solvable, we prove that there exists a McKay bijection that respects 
the decomposition numbers $d_{\chi\varphi}$, whenever $\varphi$ is linear.
 \end{abstract}

\maketitle

\section{Introduction} 
The McKay conjecture (now a theorem, see \cite{CS24}) establishes that there exists a bijection
$$f: \irrp G \rightarrow \irrp{\norm GP} \, ,$$
where $G$ is a finite group, $p$ is a prime, $P\in \syl pG$, and $\irrp G$ is the set of the irreducible
complex characters of $G$ of degree not divisible by $p$. It has also been conjectured that $f$ can be chosen to commute with the action of ${\rm Aut}(G)_P \times {\mathcal G}_{\mathcal P}$,
where ${\rm Aut}(G)_P$ is the group of automorphisms of $G$ that fix $P$ and 
${\mathcal G}_{\mathcal P}$ is the group of Galois automorphisms of ${\rm Gal}(\Q_{|G|})$ that fix
any prime ideal $\mathcal P$ of the ring of algebraic integers in the cyclotomic field $\Q_{|G|}$
(see Conjectures 9.18 and 9.13 in \cite{N18} and \cite{N04}).
Furthermore, $f$ should satisfy that
$f(\chi)(1) \equiv \pm\chi(1) $ mod $p$ for $\chi \in \irrp G$ according to \cite{IN02}.

If $G$ is $p$-solvable, but not in general, $f$ can also be chosen satisfying that $f(\chi)(1)$ divides $\chi(1)$ (see \cite{R19} and \cite{T07}). 
Without $p$-solvability, E. Giannelli has proposed that $f$ can be chosen such that $f(\chi)(1) \le \chi(1)$ for $\chi \in \irrp G$ (see \cite{G25}).
\medskip

What about $p$-decomposition numbers? Recall that if $\chi \in \irr G$ and $\ibr G$ is a set of $p$-irreducible Brauer characters, we have that
$$\chi^0=\sum_{\varphi \in \ibr G} d_{\chi \varphi} \varphi\, ,$$
for unique non-negative numbers called the {\sl decomposition numbers}. Also, the {\sl projective indecomposable} character associated with $\varphi\in\ibr G$ is defined as
\[\Phi_{\varphi}=\sum_{\chi\in\irr G}d_{\chi\varphi}\chi.\] The problem of relating McKay bijections with decomposition numbers seems to be quite hard, if not impossible.

\medskip
The purpose of this note is to prove the following prediction of G. Navarro.

\begin{thmA}
Suppose that $G$ is a $p$-solvable finite group and let $P\in\syl p G$. Then there exists a
bijection $f: \irrp G \rightarrow \irrp{\norm GP}$ such that $$d_{\chi \varphi}=d_{f(\chi) \varphi_{\norm GP}}$$
for $\chi \in \irrp G$ and $\varphi \in \ibr G$ linear.
\end{thmA}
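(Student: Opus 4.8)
The plan is to first convert the decomposition data for linear Brauer characters into information about a single object, the projective cover of the trivial module. Write $1$ for the trivial Brauer character of $G$ and $\Phi_1=\sum_{\chi\in\irr G}d_{\chi,1}\chi$ for its projective indecomposable character. If $\lambda\in\irr G$ is linear of $p'$-order, so that $\varphi=\lambda^0$ is a typical linear Brauer character, then multiplication by $\lambda$ permutes $\irr G$ and $\ibr G$ while preserving decomposition numbers; this gives $\Phi_{\lambda^0}=\lambda\Phi_1$ and hence
$$d_{\chi,\lambda^0}=\langle\Phi_{\lambda^0},\chi\rangle=\langle\Phi_1,\overline{\lambda}\chi\rangle=d_{\overline{\lambda}\chi,1}.$$
Since multiplication by a linear character preserves degrees, $\overline\lambda\chi$ again lies in $\irrp G$, and the same identities hold in $\norm GP$ with $\lambda$ replaced by its restriction $\lambda|_{\norm GP}$. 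Consequently it suffices to produce a McKay bijection $f$ that is equivariant for multiplication by $p'$-order linear characters (acting on $\irrp{\norm GP}$ through restriction) and that preserves the single invariant $\chi\mapsto d_{\chi,1}$; the displayed identity for all linear $\varphi$ then follows formally.

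The structural observation that makes $d_{\chi,1}$ tractable is that it is governed by the principal block. Since $d_{\chi,1}\neq0$ forces $\chi$ to lie in the principal block $B_0(G)$, and since $\oh{p'}G\le\ker\chi$ for every $\chi\in B_0(G)$ (the principal block of $G$ being the inflation of the principal block of $G/\oh{p'}G$), the invariant $d_{\chi,1}$ is unchanged under passing to $G/\oh{p'}G$ and vanishes on characters not trivial on $\oh{p'}G$. More is true for the full linear datum: $d_{\overline\lambda\chi,1}\neq0$ forces $\chi_{\oh{p'}G}$ to be a multiple of a single $G$-invariant linear character of $K=\oh{p'}G$, so only characters lying over $G$-invariant linear characters of $K$ contribute, and these are exactly the ones reachable by a central character-triple reduction. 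I would then run an induction on $|G|$ along the $p$-solvable layer structure. When $K\neq1$ I use Clifford theory over $K$: the Glauberman correspondence ${\rm Irr}_P(K)\to\irr{\cent KP}$ matches the $P$-invariant characters of $K$ with those of $K\cap\norm GP=\cent KP$, while the Fong--Reynolds and Fong correspondences reduce $\irr G$ lying over a chosen $\theta\in\irr K$ to a smaller group or central extension. These correspondences are block preserving, send the principal block to the principal block, and in the $p$-solvable setting can be realized by isomorphisms of the relevant character triples, so they transport $d_{-,1}$ and the $p'$-order linear characters functorially; the inductive bijection in the smaller group is then carried back. When $K=1$ one has $\oh pG=F(G)\neq1$, and the reduction is instead performed over $\oh pG$, which is the case in which $\norm GP$ genuinely differs from $G$ and which carries the real content.

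The main obstacle is that decomposition numbers are \emph{not} preserved by Clifford theory in general, so the reductions above cannot respect arbitrary $d_{\chi\varphi}$; this is precisely why the statement is restricted to linear $\varphi$. The argument must exploit two special features of the trivial Brauer character: it lies in the principal block, and $\Phi_1$ is the projective cover of the trivial module, a $p$-permutation module that behaves well under the Brauer and Fong correspondences. These are exactly the features that survive the block-theoretic reductions, which for $p$-solvable groups preserve the trivial module and its projective cover. The delicate points I expect to absorb most of the work are: (i) verifying that the bijection produced inductively can be chosen simultaneously equivariant for the linear-character action and compatible with $d_{-,1}$, since these two requirements must be reconciled at each stage; and (ii) handling the $\oh pG$-reduction in the case $\oh{p'}G=1$, where one must track the trivial Brauer character of $G$ against that of $\norm GP$ through the passage to $G/\oh pG$ and back, matching not merely the McKay counts but the finer multiplicities $d_{\chi,1}$ on both sides. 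Once these are in place, the formal reduction of the first paragraph upgrades the preservation of $d_{-,1}$ to the preservation of $d_{\chi,\varphi}$ for all linear $\varphi$, completing the proof.
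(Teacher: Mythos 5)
Your opening reduction is correct and genuinely useful: every linear $\varphi\in\ibr G$ is $\lambda^0$ for a linear $\lambda\in\irr G$ of $p'$-order, and the identity $d_{\chi,\lambda^0}=d_{\overline\lambda\chi,1}$ does reduce Theorem A to producing a McKay bijection that is equivariant under multiplication by such $\lambda$ and preserves $d_{-,1}$. From that point on, however, what you have is a programme rather than a proof, and it breaks exactly where the theorem has its content. When $\oh{p'}G=1$ (which, by induction, is the essential case), a $p$-solvable group has a unique $p$-block, so all of your principal-block, Fong--Reynolds and Fong machinery is vacuous: every character lies in the principal block, the correspondence over the trivial character of $K=1$ is the identity, and nothing is reduced. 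The real work is the Clifford-theoretic descent over $N=\oh pG$ --- choosing $\theta$ among the linear characters of $N$ that extend to $P$, proving $\theta$ extends to $G_\theta$, and pushing the multiplicity through the Clifford, Gallagher and Mackey steps --- which you defer to your point (ii) without argument. Likewise, the assertion that the correspondences ``transport $d_{-,1}$ and the $p'$-order linear characters functorially'' because they are ``realized by isomorphisms of the relevant character triples'' cannot simply be cited: ordinary character triple isomorphisms do not control decomposition numbers, and reconciling equivariance with preservation of $d_{-,1}$ at each stage is the whole difficulty, as you yourself acknowledge in (i).

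The paper sidesteps all of this with one elementary device that is absent from your proposal: for $H$ a $p$-complement of $G$ (which exists by $p$-solvability) and $\varphi$ linear, one has $(\varphi_H)^G=\Phi_\varphi$, hence $d_{\chi\varphi}=[\chi_H,\varphi_H]$. This converts every decomposition number in sight into an ordinary inner product of restrictions to $p'$-subgroups, after which the induction is pure ordinary character theory (Clifford correspondence, Gallagher, Mackey, and the restriction bijection of Lemma \ref{restBij}); the twist by linear characters that you place at the front appears there only at the very end, in the computation $[\beta_H\gamma_H,\alpha_H\gamma_H]=[\beta_H,(\alpha^0)_H]$. To complete your argument you would need either to prove the $\oh{p'}G=1$ case in full by some modular substitute for this identity, or to adopt the $p$-complement formula outright --- in which case your first-paragraph reduction, while elegant, becomes unnecessary.
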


Notice that if $P=\norm GP$ and $\varphi$ is trivial, then Theorem A extends the $p$-solvable case of Theorem B
in \cite{NT}.
In general, we cannot expect the equality in Theorem A to hold outside $p$-solvable groups
(see for instance ${\sf A}_5$ with $p=2$),
 and even in groups with a $p$-complement 
(see for instance ${\rm PSL}_2(8)$ with $p=3$). In these two groups, we have that   $d_{\chi 1} \ge  d_{\chi^* 1}$ holds
for $\chi \in \irrp G$, but unfortunately this inequality does not hold in general as 
shown by $M_{24}$ for $p=2$, where the Sylow
2-subgroup is self-normalizing and yet
it has odd-degree characters $\chi$ with $d_{\chi1}=0$. Since in $p$-solvable groups there are also
bijections $^*: \ibrp G \rightarrow \ibrp{\norm GP}$, where $\ibrp G$ is the set of $p'$-degree $p$-Brauer characters of $G$ (see Theorem A of \cite{W90}), it is somewhat reasonable to ask if bijections can be chosen such that $d_{\chi \varphi}=
d_{\chi^* \varphi^*}$. The answer for this is negative, at least for non-linear Brauer characters
$\varphi$: for instance in the group ${\tt SmallGroup}(216,153)$,
for $p=3$.  At the time of this writing, however, we haven't found an example showing that 
there are not bijections satisfying $d_{\chi \varphi} \ge
d_{\chi^* \varphi^*}$  in $p$-solvable groups.

\medskip
Theorem A implies the following, which does not seem to have been noticed before.

  \medskip
  \begin{corB}
  Let $G$ be $p$-solvable and let $\chi \in \irrp G$. Then $d_{\chi 1}=1$ or $0$.
  The number of $\chi \in \irrp G$ with $d_{\chi 1}=1$ is the number of $\norm GP$-orbits on
  $P/P'$, where $P \in \syl p G$.
  \end{corB}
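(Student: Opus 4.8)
The plan is to deduce Corollary B from Theorem A by transporting everything to $H=\norm GP$, where $P$ is a normal Sylow $p$-subgroup, and then to carry out an explicit decomposition-number computation inside $H$. Taking $\varphi=1$, the trivial Brauer character (which is linear), Theorem A produces a bijection $f:\irrp G\to\irrp H$ with $d_{\chi 1}=d_{f(\chi)1}$ for every $\chi\in\irrp G$. Since $f$ is a bijection, both assertions of Corollary B follow once one proves the following statement about $H$ (no $p$-solvability of $H$ is needed; it enters only through Theorem A): if $P\in\syl p H$ and $P\trianglelefteq H$, then $d_{\psi 1}\in\{0,1\}$ for $\psi\in\irrp H$, and the number of such $\psi$ with $d_{\psi1}=1$ equals the number of $H$-orbits on $P/P'$.

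For the computation in $H$, I would first recall that since $P=\oh p H$ is a normal $p$-subgroup it acts trivially on every simple $kH$-module, so $\ibr H=\Irr(H/P)$ and the trivial Brauer character is simply $1$. By Schur--Zassenhaus write $H=P\rtimes K$ with $K$ a $p$-complement. The key step is to identify the projective cover of the trivial module: as $K$ is a $p'$-group, $\Ind_K^H k$ is projective, and the multiplicity of a simple $S$ in its head is $\dim\Hom_{kH}(\Ind_K^H k,S)=\dim\Hom_{kK}(k,\Res_K S)=\dim(\Res_K S)^K$; writing $S$ as the inflation of a simple $kK$-module shows this is $0$ unless $S$ is trivial, in which case it is $1$. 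Hence $\Ind_K^H k$ is indecomposable with trivial head, so it is the projective cover of $k$ and $\Phi_1=(1_K)^H$. In particular $d_{\psi1}=\langle\Phi_1,\psi\rangle=\langle\Res_K\psi,1_K\rangle$, and a Mackey computation gives $\Res_P\Phi_1=\rho_P$, the regular character of $P$; thus each $\vartheta\in\Irr(P)$ occurs in $\Res_P\Phi_1$ with multiplicity $\vartheta(1)$, and in particular each linear $\vartheta$ occurs with multiplicity exactly $1$.

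Next I would run Clifford theory over $P$. If $\psi\in\irrp H$ lies over $\vartheta\in\Irr(P)$, then $\vartheta(1)$ divides $\psi(1)$ and is a power of $p$, so $p\nmid\psi(1)$ forces $\vartheta(1)=1$; conversely every character over a linear $\vartheta$ has $p'$-degree, since the inertia quotient $K_\vartheta$ is a $p'$-group. For a $K$-orbit representative $\vartheta\in\lin P$, the $K_\vartheta$-invariant linear character $\vartheta$ extends to a linear character $\hat\vartheta$ of $H_\vartheta=P\rtimes K_\vartheta$ via the explicit formula $\hat\vartheta(ak)=\vartheta(a)$ (this is exactly where invariance is used), so Gallagher's theorem together with the Clifford correspondence give $\Irr(H\mid\vartheta)=\{\Ind_{H_\vartheta}^H(\hat\vartheta\beta):\beta\in\Irr(K_\vartheta)\}$. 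A Mackey computation shows that $\vartheta$ occurs in $\Res_P\Ind_{H_\vartheta}^H(\hat\vartheta\beta)$ with multiplicity $\beta(1)$; comparing with the multiplicity $1$ of $\vartheta$ in $\Res_P\Phi_1$ yields
\[\sum_{\beta\in\Irr(K_\vartheta)}d_{\psi_\beta 1}\,\beta(1)=1,\]
where $\psi_\beta=\Ind_{H_\vartheta}^H(\hat\vartheta\beta)$. As the summands are non-negative integers, exactly one $\beta$ contributes, it satisfies $\beta(1)=1$ and $d_{\psi_\beta 1}=1$, and all remaining constituents have $d_{\psi 1}=0$. This proves $d_{\psi1}\in\{0,1\}$ for $\psi\in\irrp H$ and that precisely one character per $K$-orbit of linear characters has $d_{\psi1}=1$.

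Finally I would count: the number of $\psi\in\irrp H$ with $d_{\psi1}=1$ equals the number of $K$-orbits on $\lin P=\widehat{P/P'}$, which by Brauer's permutation lemma equals the number of $K$-orbits on $P/P'$; since $P$ acts trivially on $P/P'$ by conjugation, this is the number of $\norm GP$-orbits on $P/P'$. Combined with the reduction through $f$, this establishes both statements. The main obstacle is the identification $\Phi_1=(1_K)^H$ together with the multiplicity-one input $\Res_P\Phi_1=\rho_P$: once these are in hand, the collapse of the displayed identity does all the work. A secondary point requiring care is verifying that a character of $\irrp H$ lies over a linear character of $P$ if and only if it does, which is what pins down exactly which $\psi$ enter the count.
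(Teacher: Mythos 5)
Your argument is correct and shares its skeleton with the paper's proof: reduce to $H=\norm GP$ via Theorem A applied to $\varphi=1$, run Clifford--Gallagher theory over the now-normal Sylow subgroup $P$, and convert the count of orbits on $\Irr(P/P')$ into orbits on $P/P'$ by Brauer's permutation lemma. The genuine difference lies in the mechanism that pins down $d_{\psi 1}$. The paper evaluates each decomposition number directly: writing $\psi=(\hat\vartheta\alpha)^{H}$ with $\alpha\in\Irr(H_\vartheta/P)$, Mackey plus the fact that the canonical extension $\hat\vartheta$ has $p$-power order (hence restricts trivially to the $p$-complement $K_\vartheta$ of $H_\vartheta$) give $d_{\psi 1}=[\alpha_{K_\vartheta},1_{K_\vartheta}]$, and since restriction $\Irr(H_\vartheta/P)\to\Irr(K_\vartheta)$ is a bijection (Lemma \ref{restBij}) this inner product of irreducible characters is $1$ precisely when $\alpha=1$ and $0$ otherwise. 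You instead never evaluate an individual $d_{\psi 1}$: you compute $[\Res_P\Phi_1,\vartheta]=1$ for linear $\vartheta$ and let positivity collapse the identity $\sum_\beta d_{\psi_\beta 1}\beta(1)=1$. Both are sound; the paper's route is shorter because it imports $\Phi_1=(1_K)^H$ from Problem 2.8 of \cite{N98} (already invoked at the start of the proof of Theorem A) and because it identifies which member of each Clifford family carries the nonzero decomposition number, namely the one with $\beta=1$, whereas your collapse argument only certifies that exactly one member does --- which is all the count requires. Your from-scratch identification of $(1_K)^H$ as the projective cover of the trivial module is a clean, self-contained substitute for that citation in the special case $P\nor G$.
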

It is not difficult to prove that, as a consequence of the McKay conjecture, $|\irrp G|$ is the number of $\N G P$-orbits on $P/P'$ if and only if $\N G P=P$. Hence, Corollary B is consistent with Conjecture A of \cite{NT}.
\section{Proofs}
Our notation for characters follows \cite{Is} and \cite{N18}. Our notation for modular characters follows \cite{N98}. 
We begin by quoting the results that we shall need for the reader's convenience.

\begin{lem}\label{restBij}
Suppose that $G$ is a finite group, $N\unlhd G$, and $H\leq G$ is such that $G=NH$. Let $M=N\cap H$. Then the restriction map ${\rm Char}(G/N)\to{\rm Char}(H/M)$ is a bijection satisfying
\[[\alpha,\beta]=[\alpha_H,\beta_H]\]
for $\alpha,\beta\in{\rm Char}(G/N)$. Hence, the restriction defines a bijection $\Irr(G/N)\to\Irr(H/M)$.
\end{lem}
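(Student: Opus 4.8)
The plan is to recognize this statement as the character-theoretic shadow of the second (diamond) isomorphism theorem. Since $N\unlhd G$, the hypothesis $G=NH$ guarantees that the composite $H\hookrightarrow G\to G/N$ is surjective with kernel $N\cap H=M$, so it induces a group isomorphism $\phi\colon H/M\to G/N$, $hM\mapsto hN$. This is the only place the assumption $G=NH$ enters: it is exactly what forces the image of $H$ in $G/N$ to be all of $G/N$. Now, any group isomorphism $\phi$ induces by pullback $\alpha\mapsto\alpha\circ\phi$ a bijection ${\rm Char}(G/N)\to{\rm Char}(H/M)$ that carries irreducibles to irreducibles and preserves inner products, simply because relabelling the elements of a group changes none of its character theory. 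So the entire content of the lemma is the identification of this pullback with the restriction map.

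To make that identification I would first check that restriction maps ${\rm Char}(G/N)$ into ${\rm Char}(H/M)$: viewing $\alpha\in{\rm Char}(G/N)$ as a character of $G$ with $N\leq\Ker(\alpha)$, one has $M=N\cap H\leq\Ker(\alpha_H)$, so $\alpha_H$ is (inflated from) a character of $H/M$. The key—and essentially only—computation is then that for $h\in H$, $\alpha_H(h)=\alpha(h)=\alpha(hN)=\alpha(\phi(hM))=(\alpha\circ\phi)(hM)$. Thus, under the natural identification of ${\rm Char}(G/N)$ and ${\rm Char}(H/M)$ with the characters of $G$ and $H$ having the appropriate normal subgroup in their kernel, restriction to $H$ is literally the pullback along $\phi$.

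From here everything is formal. The pullback along the isomorphism $\phi$ satisfies $[\alpha\circ\phi,\beta\circ\phi]_{H/M}=[\alpha,\beta]_{G/N}$, and since a character of $G/N$ (resp.\ $H/M$) is constant on cosets of $N$ (resp.\ $M$), these inner products computed on the quotients coincide with those computed on $G$ and $H$; this yields $[\alpha,\beta]=[\alpha_H,\beta_H]$. Restricting to irreducible characters, the inner-product identity forces $\alpha_H$ to have norm $1$, hence to be irreducible, whenever $\alpha$ is, and forces injectivity, so restriction cuts down to a bijection $\Irr(G/N)\to\Irr(H/M)$. I do not expect a real obstacle here; the one point demanding care is the single displayed identity $\alpha_H(h)=(\alpha\circ\phi)(hM)$, which is where the decomposition $G=NH$ (so that every coset $gN$ already meets $H$) is implicitly used.
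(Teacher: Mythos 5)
Your proof is correct. The paper does not argue this lemma at all---it simply cites Theorem (1.18) of Navarro's \emph{Character Theory and the McKay Conjecture}---and your argument (identifying restriction with pullback along the isomorphism $H/M\cong G/N$ furnished by $G=NH$) is precisely the standard proof of that cited result.
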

\begin{proof}
    See Theorem (1.18) of \cite{N18}.
\end{proof}
\begin{lem}[Gallagher correspondence]
    Suppose that $G$ is a finite group, $N\unlhd G$ and $\theta\in\irr N$ extends to some $\chi\in\irr G$. Then the map \[\aplii{\irr{G/N}}{\irr{G|\theta}}{\beta}{\beta\chi}\]
    is a bijection.
\end{lem}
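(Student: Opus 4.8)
The plan is to verify, in order, that $\beta\chi$ is a well-defined irreducible character lying over $\theta$, that the assignment is injective, and finally that it is surjective onto $\irr{G|\theta}$. Throughout I write $\theta=\chi_N$, which is irreducible by hypothesis. First I would record the easy restriction computation: since $N\unlhd G$ lies in the kernel of every $\beta\in\irr{G/N}$, we have $\beta_N=\beta(1)\cdot 1_N$, whence $(\beta\chi)_N=\beta(1)\theta$. This already shows that $\beta\chi$ is a genuine character (a product of characters) all of whose irreducible constituents lie over $\theta$, so the image lands in $\irr{G|\theta}$ as soon as irreducibility is established.

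The heart of the argument is the isometry $[\beta\chi,\gamma\chi]_G=[\beta,\gamma]_{G/N}$ for $\beta,\gamma\in\irr{G/N}$. The plan is to reduce this to the pointwise identity $\sum_{n\in N}|\chi(tn)|^2=|N|$ valid for every $t\in G$: writing $\mu=\beta\overline\gamma$, which is constant on cosets of $N$ because $N\le\ker\beta\cap\ker\gamma$, one groups $\tfrac1{|G|}\sum_{g\in G}\mu(g)|\chi(g)|^2$ over the cosets, and the identity collapses it to $\tfrac1{[G:N]}\sum_{gN}\mu(gN)=[\beta,\gamma]_{G/N}$. To prove the pointwise identity I would fix a representation $\rho$ affording $\chi$ and read $\sum_{n\in N}|\chi(tn)|^2$ as $|N|\,\Tr\bigl((\rho\otimes\overline\rho)(t)\,E\bigr)$, where $E=\tfrac1{|N|}\sum_{n\in N}(\rho\otimes\overline\rho)(n)$ is the projection onto the $N$-fixed points of $V\otimes\overline V\cong\End(V)$. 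Here is exactly where irreducibility of $\theta$ is essential: by Schur's lemma the fixed space of $\End(V)$ under $N$-conjugation is the line $\CC\cdot\id_V$, and since conjugation by $\rho(t)$ fixes $\id_V$, the trace in question is $1$. Granting the isometry, $[\beta\chi,\beta\chi]_G=[\beta,\beta]_{G/N}=1$ forces the genuine character $\beta\chi$ to be irreducible, while $[\beta\chi,\gamma\chi]_G=[\beta,\gamma]_{G/N}=0$ for $\beta\ne\gamma$ yields injectivity.

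For surjectivity the plan is a counting argument weighing multiplicities over $\theta$. Since $\theta=\chi_N$ is the restriction of a class function of $G$, it is $G$-invariant, so $(\theta^G)_N=[G:N]\theta$ and Frobenius reciprocity gives $[\theta^G,\theta^G]_G=[G:N]$. Writing $\theta^G=\sum_{\psi\in\irr{G|\theta}}[\psi_N,\theta]\psi$, this reads $\sum_{\psi\in\irr{G|\theta}}[\psi_N,\theta]^2=[G:N]$. On the other hand the characters $\beta\chi$ are distinct members of $\irr{G|\theta}$ with $[(\beta\chi)_N,\theta]=\beta(1)$, so they already contribute $\sum_{\beta\in\irr{G/N}}\beta(1)^2=|G/N|=[G:N]$ to the very same sum. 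Because every $\psi\in\irr{G|\theta}$ satisfies $[\psi_N,\theta]\ge 1$, there is no room left for any further constituent, and the map is onto.

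The hard part will be the isometry step, and more precisely the pointwise identity $\sum_{n\in N}|\chi(tn)|^2=|N|$: this is the one place where the hypothesis that $\theta=\chi_N$ is irreducible (equivalently, that $\chi$ extends $\theta$) genuinely enters, through Schur's lemma. Everything else is bookkeeping with restriction, Frobenius reciprocity, and the elementary fact that a character of norm $1$ is irreducible.
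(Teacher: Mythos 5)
Your proof is correct. Note that the paper gives no argument of its own for this lemma: it simply cites Corollary (6.17) of Isaacs, so what you have produced is a self-contained proof of a quoted textbook fact rather than an alternative to an in-paper argument. Your route is the classical Gallagher argument in all its essentials --- $(\beta\chi)_N=\beta(1)\theta$ for well-definedness, the isometry $[\beta\chi,\gamma\chi]=[\beta,\gamma]$ for irreducibility and injectivity, and the multiplicity count $\sum_{\psi\in\irr{G|\theta}}[\psi_N,\theta]^2=[G:N]$ matched against $\sum_\beta\beta(1)^2=[G:N]$ for surjectivity. The one place where your mechanics differ from the usual textbook derivation is the isometry step: Isaacs gets orthonormality of the $\beta\chi$ globally, from $\theta^G=\chi\cdot(1_N)^G=\sum_\beta\beta(1)\,\beta\chi$ together with $[\theta^G,\theta^G]=[G:N]$, which forces $[\beta\chi,\gamma\chi]=\delta_{\beta\gamma}$ without ever isolating a pointwise statement; you instead prove the stronger local identity $\sum_{n\in N}|\chi(tn)|^2=|N|$ via Schur's lemma, identifying the image of the averaging projection $E$ with $\End_N(V)=\CC\cdot\id_V$ and observing that conjugation by $\rho(t)$ fixes $\id_V$, so $(\rho\otimes\overline\rho)(t)E=E$ and the trace is $1$. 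That step is where irreducibility of $\theta=\chi_N$ genuinely enters, exactly as you say, and your use of it is airtight; both routes are sound, yours being marginally longer but yielding the pointwise identity as a reusable byproduct.
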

\begin{proof}
    See Corollary (6.17) of \cite{Is}.
\end{proof}
\begin{lem}\label{extSylow}
    Let $N\unlhd G$ and let $\theta\in\Irr(N)$ be $G$-invariant. Then $\theta$ extends to $G$ if and only if $\theta$ extends to $P$ for every Sylow subgroup $P/N$ of $G/N$.
\end{lem}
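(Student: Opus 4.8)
The forward implication is immediate: if $\theta$ extends to some $\chi\in\Irr(G)$, then for any subgroup $P$ with $N\le P\le G$ the restriction $\chi_P$ extends $\theta$ to $P$, and in particular this holds whenever $P/N$ is a Sylow subgroup of $G/N$. The content is the converse, so assume from now on that $\theta$ extends to $P$ whenever $P/N$ is a Sylow subgroup of $G/N$. The plan is to encode the extendibility of $\theta$ as the vanishing of a single cohomology class and then to detect that vanishing one prime at a time.

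Since $\theta$ is $G$-invariant, there is a projective representation $\mathcal{P}$ of $G$ such that $\mathcal{P}_N$ affords $\theta$ and whose factor set $\alpha$ is constant on $N$-cosets, hence inflated from a factor set of $G/N$ with values in $\mathbb{C}^{\times}$ (see Chapter 11 of \cite{Is}). The class $\omega=[\alpha]\in H^2(G/N,\mathbb{C}^{\times})$ is independent of the choices made, and $\theta$ extends to an ordinary character of $G$ if and only if $\omega$ is trivial. Moreover this obstruction is compatible with passing to subgroups: if $N\le P\le G$, then the restriction of $\mathcal{P}$ to $P$ is a projective representation whose restriction to $N$ affords $\theta$ and whose factor set is the restriction of $\alpha$, so $\theta$ extends to $P$ if and only if the image of $\omega$ under the restriction map $H^2(G/N,\mathbb{C}^{\times})\to H^2(P/N,\mathbb{C}^{\times})$ is trivial. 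Thus the hypothesis amounts to saying that $\omega$ restricts to the trivial class on $H^2(P/N,\mathbb{C}^{\times})$ for every Sylow subgroup $P/N$ of $G/N$.

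It then remains to argue that a class with this property is itself trivial. The group $A=H^2(G/N,\mathbb{C}^{\times})$ is finite abelian, so it is the direct sum of its $p$-primary components as $p$ runs over the primes dividing $|G/N|$. Fix such a $p$ and let $\bar P=P/N$ be a Sylow $p$-subgroup of $G/N$. The composite of restriction and corestriction $A\to H^2(\bar P,\mathbb{C}^{\times})\to A$ equals multiplication by the index $[G/N:\bar P]$, which is prime to $p$. Since by hypothesis $\mathrm{res}_{\bar P}(\omega)=0$, we get $[G/N:\bar P]\,\omega=0$; as this integer is prime to $p$, it annihilates the $p$-primary component of $\omega$, forcing that component to vanish. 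Running over all $p$ kills every primary component, so $\omega=0$ and $\theta$ extends to $G$, as desired.

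I expect the only delicate point to be the setup rather than the cohomological endgame: one must ensure that the factor set attached to the $G$-invariant character $\theta$ is genuinely a well-defined class in $H^2(G/N,\mathbb{C}^{\times})$, that its triviality is equivalent to extendibility, and that it restricts correctly to subgroups, all of which require the machinery of Chapter 11 of \cite{Is} to be quoted precisely (alternatively, one may first replace $(G,N,\theta)$ by an isomorphic character triple in which $N$ is central and cyclic, where the obstruction becomes the class of a central extension). Once the obstruction class and its functoriality under restriction are in hand, the detection of triviality on Sylow subgroups via the transfer map is entirely routine, using only that a finite abelian group is detected on its primary components.
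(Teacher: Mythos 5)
Your argument is correct: the reduction of extendibility to the vanishing of the obstruction class in $H^2(G/N,\mathbb{C}^{\times})$, its compatibility with restriction to intermediate subgroups, and the detection of triviality one prime at a time via the restriction--corestriction composite are all sound. The paper simply cites Theorem 5.10 of \cite{N18} here, and your proof is essentially the standard argument behind that result (compare Theorem 11.31 of \cite{Is}), so there is nothing to add beyond making sure the factor-set machinery is quoted precisely, as you yourself note.
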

\begin{proof}
    See Theorem (5.10) of \cite{N18}.
\end{proof}
\begin{lem}\label{PinvConst}
    Let $G$ be a finite group, let $P\in\syl p G$ and let $L\unlhd G$. If $\chi\in\irrp G$, then $\chi_L$ has a $P$-invariant constituent and any two of them are $\norm G P$-conjugate.
\end{lem}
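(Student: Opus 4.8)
The plan is to combine Clifford's theorem with an elementary fixed-point count for a $p$-group, followed by a Sylow argument carried out inside an inertia subgroup. First I would fix an irreducible constituent $\theta$ of $\chi_L$ and let $\Omega$ denote the set of all irreducible constituents of $\chi_L$. By Clifford's theorem $G$ acts transitively on $\Omega$ and $\chi_L=e\sum_{\psi\in\Omega}\psi$ for some positive integer $e$, so that $\chi(1)=e\,|\Omega|\,\theta(1)$. In particular $|\Omega|$ divides $\chi(1)$, and since $\chi\in\irrp G$ we obtain $p\nmid|\Omega|$. This single divisibility is the only place where the hypothesis $\chi\in\irrp G$ is used, and both halves of the statement rest on it.

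For existence, I would let the $p$-group $P$ act on $\Omega$ by restricting the transitive $G$-action. Writing $\Omega^P$ for the set of $P$-fixed points, the orbit decomposition gives $|\Omega|\equiv|\Omega^P|\pmod p$, because every nontrivial $P$-orbit has size divisible by $p$. As $p\nmid|\Omega|$, this forces $\Omega^P\neq\emptyset$, and any element of $\Omega^P$ is a $P$-invariant constituent of $\chi_L$.

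For the conjugacy statement, suppose $\theta$ and $\theta'$ are both $P$-invariant constituents. By transitivity there is $g\in G$ with $\theta^g=\theta'$. Let $I=I_G(\theta)$ be the inertia group; then $[G:I]=|\Omega|$ is prime to $p$, so $P$ (which fixes $\theta$) is in fact a Sylow $p$-subgroup of $I$. Since $\theta'=\theta^g$ is $P$-invariant, we have $P\leq I^g$, hence $P^{g^{-1}}\leq I$; being conjugate to $P$, it is likewise a Sylow $p$-subgroup of $I$. By Sylow's theorem applied inside $I$ there is $h\in I$ with $P^{g^{-1}h}=P$, so $n:=h^{-1}g\in\norm G P$. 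Finally $\theta^n=(\theta^{h^{-1}})^{g}=\theta^{g}=\theta'$, using $\theta^{h}=\theta$, which exhibits $\theta$ and $\theta'$ as $\norm G P$-conjugate.

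The argument is short, and the only subtle point is the conjugacy half. The crux there is the observation that $P$ is a \emph{full} Sylow $p$-subgroup of the inertia group $I_G(\theta)$, which is precisely a consequence of $p\nmid|\Omega|$; this is exactly what makes Sylow's theorem available to correct an arbitrary $G$-conjugating element $g$ into an element of $\norm G P$. Everything else is routine orbit counting.
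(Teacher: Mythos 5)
Your proof is correct, and it is the standard argument for this fact: the paper itself gives no proof but simply cites Lemma (9.3) of \cite{N18}, whose proof proceeds exactly as yours does (Clifford's theorem gives $p\nmid|\Omega|$, orbit counting gives a $P$-fixed point, and Sylow's theorem inside the inertia group, which contains $P$ as a full Sylow $p$-subgroup, gives the $\norm GP$-conjugacy). Nothing to correct.
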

\begin{proof}
    See Lemma (9.3) of \cite{N18}.
\end{proof} 
Before stating the next preliminary result, we fix some notation. For a fixed prime $p$, we denote by $G^0$ the set of elements of $G$ whose orders are not divisible by $p$. Also, if $\Psi$ is a (complex) class function of $G$, we denote its restriction to $G^0$ by $\Psi^0$.
\begin{lem}\label{extensionBrauer}
    Let $G$ be a finite group and let $N=G'${\rm \textbf{O}}$^{p'}(G)$ be the smallest normal subgroup of $G$ whose quotient is an abelian $p'$-group. Then the map 
    \[\aplii{\{\chi\in\irr G:N\subseteq\ker\chi\}}{\{\varphi\in\ibr G:\varphi(1)=1\}}{\chi}{\chi^0}\] 
    is a bijection.
\end{lem}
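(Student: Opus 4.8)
The plan is to identify both sides with the linear characters of $G/N$ and to prove that the map is a well-defined injection between two finite sets of the same cardinality. Since $G'\subseteq N$, the quotient $G/N$ is abelian, so the characters $\chi\in\irr G$ with $N\subseteq\ker{\chi}$ are exactly the inflations to $G$ of the linear characters of $G/N$; in particular they are all linear, and there are $|G:N|$ of them. For such a $\chi$ we have $\chi^0(1)=\chi(1)=1$, and writing $\chi^0=\sum_{\varphi}d_{\chi\varphi}\varphi$ as a sum of irreducible Brauer characters with non-negative integer coefficients forces a single term, necessarily of degree one; thus $\chi^0\in\ibr G$ is linear and the map is well-defined.

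Next I would prove injectivity. Suppose $\chi_1^0=\chi_2^0$ and set $\lambda=\chi_1\overline{\chi_2}$, a linear character of $G$ with $N\subseteq\ker{\lambda}$ whose restriction $\lambda^0$ is the trivial Brauer character. The key observation is that $G/N$ is a $p'$-group, since $\Oh{p'}{G}\subseteq N$; hence $\lambda$ has order prime to $p$, and for every $g\in G$ the $p$-part $g_p$ maps to a $p$-element of the $p'$-group $G/N$, so $g_p\in N\subseteq\ker{\lambda}$. Writing $g=g_{p'}g_p$ then gives $\lambda(g)=\lambda(g_{p'})=\lambda^0(g_{p'})=1$, whence $\lambda=1$ and $\chi_1=\chi_2$.

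Finally I would count the target set. A linear Brauer character is a homomorphism $G\to\overline{\FF_p}^{\times}$; its image is a finite subgroup of $\overline{\FF_p}^{\times}$, hence cyclic of order prime to $p$, so the homomorphism is trivial on both $G'$ and $\Oh{p'}{G}$ and therefore factors through $G/N$. As $\overline{\FF_p}^{\times}$ contains all roots of unity of order prime to $p$, this set of homomorphisms is $\Hom(G/N,\overline{\FF_p}^{\times})$, which has exactly $|G:N|$ elements; an injection between finite sets of equal size is a bijection. The main obstacle will be bookkeeping the relationship between the values of $\chi$ on $G^0$ and the reduction map identifying the $p'$-roots of unity in $\CC$ with those in $\overline{\FF_p}$. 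One can instead bypass the count and establish surjectivity directly, lifting such a homomorphism through this fixed identification to a linear character of $G/N$; this is the only point where the modular-to-ordinary comparison needs genuine care.
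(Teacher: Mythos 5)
Your argument is correct. Note that the paper does not actually prove this lemma; it simply cites Problem (2.7) of \cite{N98}, so there is no in-text proof to compare against, but what you have written is precisely the standard solution to that exercise. All three steps check out: well-definedness (a degree-one Brauer character is automatically irreducible), injectivity (if $\chi_1^0=\chi_2^0$ then $\lambda=\chi_1\overline{\chi_2}$ is trivial on $G^0$, and since every $p$-part $g_p$ lies in $\Oh{p'}{G}\subseteq N\subseteq\ker{\lambda}$, it is trivial everywhere), and the count of the target (degree-one Brauer characters correspond to homomorphisms $G\to\overline{\FF}_p^{\times}$, which kill $G'$ because the image is abelian and kill $\Oh{p'}{G}$ because the image has order prime to $p$, giving exactly $|G:N|$ of them since $G/N$ is an abelian $p'$-group and $\overline{\FF}_p^{\times}$ contains all $p'$-roots of unity). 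The one point you flag yourself --- that distinct homomorphisms $G\to\overline{\FF}_p^{\times}$ yield distinct Brauer characters --- is handled by observing that such a homomorphism is determined by its values on $G^0$ (again because it kills $p$-parts) and that the fixed lifting of $p'$-roots of unity from $\overline{\FF}_p$ to $\CC$ is injective; with that remark your counting argument closes cleanly, and the alternative of proving surjectivity directly by lifting through the same identification works equally well.
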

\begin{proof}
    See Problem (2.7) of \cite{N98}.
\end{proof}
\begin{lem}\label{weakGlaub}
    Let $A$ be a finite group acting coprimely on a finite group $G$. If $\alpha,\beta\in\irr G$ are linear, $A$-invariant and $\alpha_{\textbf{C}_G(A)}=\beta_{\textbf{C}_G(A)}$, then $\alpha=\beta$.
\end{lem}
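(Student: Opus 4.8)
The plan is to reduce the claim to showing that a single linear, $A$-invariant character of $G$ that is trivial on $\cent GA$ must be the trivial character, and then to exploit the coprime factorization $G=[G,A]\,\cent GA$.

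First I would set $\gamma=\alpha\overline{\beta}$. Since $\alpha$ and $\beta$ are linear, they are group homomorphisms $G\to\C^{\times}$, and hence so is $\gamma=\alpha\beta^{-1}$; thus $\gamma\in\irr G$ is again linear. As a product of $A$-invariant characters, $\gamma$ is $A$-invariant, and by hypothesis $\gamma_{\cent GA}=\alpha_{\cent GA}\,(\beta_{\cent GA})^{-1}=1_{\cent GA}$. It therefore suffices to prove that $\gamma=1_G$, since this forces $\alpha=\beta$.

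Next I would locate $\ker\gamma$. Because $\gamma$ is a homomorphism and $A$-invariant, for every $g\in G$ and $a\in A$ we have
\[\gamma(g^{-1}g^{a})=\gamma(g)^{-1}\gamma(g^{a})=\gamma(g)^{-1}\gamma(g)=1.\]
Hence the generators $g^{-1}g^{a}$ of $[G,A]$ lie in $\ker\gamma$, and since $\ker\gamma$ is a subgroup we obtain $[G,A]\subseteq\ker\gamma$. By hypothesis we also have $\cent GA\subseteq\ker\gamma$.

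Finally, since $A$ acts coprimely on $G$, the standard coprime-action decomposition gives $G=[G,A]\,\cent GA$. Combining the two containments yields $G=[G,A]\,\cent GA\subseteq\ker\gamma$, whence $\gamma=1_G$ and $\alpha=\beta$, as required. The only substantive ingredient is the factorization $G=[G,A]\,\cent GA$, a classical consequence of coprime action; once the problem is recast in terms of the single character $\gamma$, everything else is elementary. The essential use of linearity is precisely that $\alpha\overline{\beta}$ is again a character, so that the commutator computation applies to it directly; for non-linear $\alpha,\beta$ this step would break down, consistent with the ``weak'' nature of the statement.
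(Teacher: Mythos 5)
Your proposal is correct and follows essentially the same route as the paper: reduce to the single linear $A$-invariant character $\alpha\beta^{-1}$ trivial on $\cent GA$, observe that $[G,A]$ lies in its kernel, and conclude via the coprime factorization $G=[G,A]\,\cent GA$. The only difference is that you spell out the commutator computation showing $[G,A]\subseteq\ker\gamma$, which the paper leaves implicit.
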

\begin{proof}
    Working with $\alpha\inv{\beta}$ we may assume that $\alpha_{\textbf{C}_G(A)}=1$ and prove that $\alpha$ is trivial. As $\alpha$ is linear and $A$-invariant then $[G,A]\subseteq\ker\alpha$, and therefore $[G,A]\textbf{C}_G(A)\subseteq\ker\alpha$. By coprime action it holds that $G=[G,A]\textbf{C}_G(A)$, and the result follows.
\end{proof}
Recall that if $N\unlhd G$ and $\theta\in\irr N$, then $\irr{G|\theta}$ denotes the set of irreducible characters of $G$ which lie over $\theta$. Also, for a fixed prime $p$, $\irrp{G|\theta}$ denotes the subset of irreducible characters of $\irr{G|\theta}$ whose degree is not divisible by $p$.

We can now prove Theorem A.
\begin{thm}
Suppose that $G$ is a $p$-solvable finite group and let $P\in\syl p G$. Then there exists a
bijection $f: \irrp G \rightarrow \irrp{\norm GP}$ such that $$d_{\chi \varphi}=d_{f(\chi) \varphi_{\norm GP}}$$
for $\chi \in \irrp G$ and $\varphi \in \ibr G$ linear.
\end{thm}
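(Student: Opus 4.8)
The plan is to reduce the statement to the trivial Brauer character and then to control the single decomposition number $d_{\chi 1_G}$ through the normal layers $\OO_{p'}(G)$ and $\OO_p(G)$ of the $p$-solvable group $G$. By Lemma \ref{extensionBrauer} the linear Brauer characters of $G$ are exactly the $\lambda^0$ with $\lambda\in\irr{G/N}$, where $N=G'\OO^{p'}(G)$. Since $G/\OO^{p'}(G)$ is a $p'$-group we have $P\subseteq\OO^{p'}(G)\subseteq N$, and a Frattini argument gives $G=N\norm GP$; writing $H=\norm GP$ and $M=N\cap H$, Lemma \ref{restBij} shows that restriction is an isomorphism $\lin{G/N}\to\lin{H/M}$, so $\lambda\mapsto\lambda_H$ compatibly identifies the linear Brauer characters of $G$ with some of those of $H$ and matches $\varphi\mapsto\varphi_H$. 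Because $\Phi_{\lambda^0}=\lambda\,\Phi_{1_G}$, tensoring by $\lambda$ permutes decomposition numbers and yields $d_{\chi,\lambda^0}=d_{\bar\lambda\chi,1_G}$, and likewise in $H$. Hence it suffices to construct a McKay bijection $f$ that (a) satisfies $f(\bar\lambda\chi)=\overline{\lambda_H}\,f(\chi)$ for all $\lambda\in\lin{G/N}$, and (b) satisfies $d_{\chi 1_G}=d_{f(\chi),1_H}$ for every $\chi\in\irrp G$: together (a) and (b) force $d_{\chi,\varphi}=d_{f(\chi),\varphi_H}$ for all linear $\varphi$.

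Next I would pin down $d_{\chi 1_G}$. Restricting $\chi^0=\sum_\mu d_{\chi\mu}\mu$ to the $p'$-group $\OO_{p'}(G)$, where Brauer and ordinary characters coincide, shows that $d_{\chi 1_G}\neq 0$ forces $\OO_{p'}(G)\subseteq\ker\chi$, so that $\chi$ lies in the principal block. By Fong's first reduction the decomposition numbers of the principal block of $G$ agree with those of the principal block of $\bar G=G/\OO_{p'}(G)$, so $d_{\chi 1_G}$ depends only on the image of $\chi$ in $\bar G$, which is $p$-solvable with $\OO_{p'}(\bar G)=1$ and hence has $Q:=\OO_p(\bar G)$ self-centralizing. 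For $\chi\in\irrp{\bar G}$, Lemma \ref{PinvConst} applied to $Q$ provides a $\bar P$-invariant constituent $\theta$ of $\chi_Q$, unique up to $\norm{\bar G}{\bar P}$-conjugacy, and since $\chi(1)$ is a $p'$-number while $\theta(1)$ is a power of $p$, $\theta$ must be linear. This organizes $\irrp{\bar G}$ by the $\norm{\bar G}{\bar P}$-orbit of $\theta$, and I expect to prove that each such orbit contributes exactly one character with $d_{\chi 1}=1$, all others in the orbit having $d_{\chi 1}=0$; this is what will yield Corollary B.

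With $d_{\chi 1}$ understood, I would build $f$ by induction on $|G|$, choosing a minimal normal subgroup inside $\OO_{p'}(G)$ or $\OO_p(G)$ and running the standard Clifford and Fong reductions for McKay bijections in $p$-solvable groups, arranged so as to commute with linear tensoring. Over a $\norm GP$-stable $p'$- or linear $p$-character one passes to an inertia group or to a twisted quotient; for characters trivial on $Q$ the number $d_{\chi 1}$ is read off from $\bar G/Q$, while at the bottom layer, where $\chi$ lies over a nontrivial $\bar P$-invariant linear $\theta$ of $Q$, the correspondence is governed by coprime action. Here Lemma \ref{weakGlaub} is decisive: a $\bar P$-invariant linear character of $Q$ is recovered from its restriction to $\cent Q{\bar P}$, which is precisely the part of $\theta$ visible inside $\norm GP$. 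One then checks that at each reduction step $d_{\chi 1_G}$ is carried to the matching decomposition number on the smaller group, so that the assembled $f$ satisfies (b).

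The main obstacle is exactly this last verification: showing that the Clifford and Fong correspondences used to assemble the McKay bijection transport $d_{\chi 1}$ correctly, and not merely the character degrees. The delicate point is the layer over a nontrivial invariant linear $\theta$ of $Q=\OO_p(\bar G)$, where $\chi$ need not be trivial on $Q$, so $d_{\chi 1}$ cannot be computed in a proper quotient; one must prove that the distinguished character carrying $d_{\chi 1}=1$ in each orbit is the one selected by the correspondence, while simultaneously preserving equivariance under $\lin{G/N}$ and its compatibility with the restriction $\lin{G/N}\to\lin{H/M}$ established in the first paragraph.
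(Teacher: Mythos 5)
Your opening reduction is sound and is genuinely different from the paper's: using $\Phi_{\lambda^0}=\lambda\,\Phi_{1_G}$ to trade $d_{\chi\lambda^0}$ for $d_{\bar\lambda\chi,1_G}$, together with the identification of $\lin{G/N}$ with the relevant linear characters of $\norm GP$, correctly shows that a McKay bijection which is equivariant under tensoring by $\lin{G/N}$ and preserves $d_{\chi 1}$ would suffice. The paper never isolates the trivial Brauer character in this way: its engine is the identity $(\varphi_H)^G=\Phi_\varphi$ for a $p$-complement $H$, hence $d_{\chi\varphi}=[\chi_H,\varphi_H]$, and this single formula is what lets the decomposition numbers be tracked as ordinary inner products through the Clifford and Gallagher correspondences via Mackey's theorem and Lemma \ref{restBij}. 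Your proposal establishes no comparable mechanism, and the inductive verification that the correspondences transport $d_{\chi 1}$ is exactly what you defer as ``the main obstacle''; that verification is the entire content of the paper's proof, so the core of the argument is missing rather than merely sketched.

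More seriously, the structural claim you plan to prove is false. Take $G={\sf S}_4$ and $p=2$, so $\OO_{2'}(G)=1$, $Q=\OO_2(G)\cong C_2\times C_2$, and $P\in\syl 2G$ is self-normalizing. There are exactly two $P$-invariant linear characters of $Q$ (the trivial one and one nontrivial character $\theta_0$), each a singleton $\norm GP$-orbit, so your partition of $\irrp G$ has two parts: $\{1,\sgn\}$ lying over $1_Q$, and the two degree-$3$ characters lying over $\theta_0$. But all four characters in $\irrp G$ satisfy $d_{\chi 1}=1$ (each degree-$3$ character reduces to $1+\varphi_2$ modulo $2$), so each orbit contributes two characters with $d_{\chi 1}=1$, not one. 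The correct count, as in Corollary B, is the number of $\norm GP$-orbits on $P/P'$ (here $4$), not on the $P$-invariant linear characters of $\OO_p(G/\OO_{p'}(G))$; the two coincide only when $P=\OO_p(G)$. Since this claim underpins both your verification of condition (b) and your route to Corollary B, the argument as outlined cannot be completed without replacing it.
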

\begin{proof}
 Let $H$ be a $p$-complement of $G$. If $\varphi\in\ibr G$ is linear, then by Problem 2.8 of \cite{N98},
 we have that $$(\varphi_H)^G=\Phi_{\varphi} \, ,$$
 where $\Phi_\varphi$ is the projective indecomposable character associated with $\varphi$.
 Therefore, if $\chi \in \irr G$, then we have that
 $$d_{\chi \varphi}=[\chi_H, \varphi_H]\, .$$
We proceed by induction over $|G|$. First we suppose that $N=\OO_p(G)>1$. Let $\Delta$ be a complete set of representatives of the $\N G P$-action on the linear characters of $N$ that extend to $P$, and let $\theta\in\Delta$.
    As $G=G_\theta H$, then $|G_\theta:G_\theta\cap H|$ is a $p$-power and hence $(G_\theta\cap H)N/N$ is a $p$-complement of $G_\theta/N$. Also, $\N{G_\theta\cap H}PN/N$ is a $p$-complement of $\N{G_\theta} P/N$. By induction, there exists a bijection $\tilde f_\theta:\Irra{p'}{G_\theta/N}\to\Irra{p'}{\N{G_\theta} P/N}$ satisfying 
    \[[\hat\beta_{(G_\theta\cap H)N/N},\hat\varphi_{(G_\theta\cap H)N/N}]=[\tilde f_\theta(\hat\beta)_{\N{G_\theta\cap H}PN/N},\hat\varphi_{\N{G_\theta\cap H}PN/N}]\]
    for every $\hat\beta\in\irrp{G_\theta/N}$ and every linear $\hat\varphi\in\ibr{G_\theta/N}$.
    For every $N\unlhd G$, the map
    \[^\sim:\{\chi\in{\rm Char}(G):N\subseteq\ker\chi\}\to{\rm Char}(G/N)\]
    defined via $\tilde\chi(Ng)=\chi(g)$ is a bijection preserving the inner product. We denote its inverse by $r$. Sometimes it is convenient to identify $\irr{G/N}$ as a subset of $\irr G$, as we do next.
    For each $\beta\in\irrp{G_\theta/N}\subseteq\irr{G_\theta}$ we define $\hat f_\theta(\beta)=r(\tilde f_\theta(\tilde\beta))$. 
    
    We prove that $\theta$ extends to $G_\theta$. By Lemma \eqref{extSylow} it suffices to check that $\theta$ extends to $Q$ for every Sylow subgroup $Q/N$ of $G_\theta/N$. Let $Q/N\in\syl q {G_\theta/N}$. If $q=p$, then $\theta$ extends to $Q$ by the definition of $\Delta$. If $q\not=p$, then $\theta(1)o(\theta)$ is coprime to $|Q:N|$, and therefore by Corollary 6.27 of \cite{Is}, it follows that $\theta$ extends to $Q$. Thus, we conclude that $\theta$ extends to $G_\theta$. 
    
    Let $\gamma$ be an extension of $\theta$ to $G_\theta$. As its $p'$-part satisfies $(\gamma_{p'})_N=1_N$, then we may assume that $o(\gamma)$ is a $p$-power.
    Consider \[h:\Irra{p'}{G_\theta/N}\to\Irra{p'}{G_\theta|\theta},\] \[j:\Irra{p'}{\N{G_\theta}P/N}\to\Irra{p'}{\N{G_\theta}P|\theta}\] to be the respective Gallagher correspondences associated to $\gamma$ and $\gamma_{\N{G_\theta}P}$, and define $g=\inv h\hat f_\theta j$. 
    Let \[a:\Irra{p'}{G_\theta|\theta}\to\Irra{p'}{G|\theta},\] \[b:\Irra{p'}{\N{G_\theta}P|\theta}\to\Irra{p'}{\N GP|\theta}\] be the respective Clifford correspondences and let $f_\theta=\inv agb$. We prove that $f_\theta$ satisfies the desired condition. Let $\chi\in\Irra{p'}{G|\theta}$ and let $\varphi\in\ibr G$ be linear. Let $\psi\in\Irra{p'}{G_\theta|\theta}$ be the Clifford correspondent of $\chi$ with respect to $\theta$. Since $G=G_\theta H$, by Mackey's theorem we may write $\chi_H=(\psi^{G_\theta H})_H=(\psi_{G_\theta\cap H})^H$. Also, since $\N G P=\N{G_\theta}{P}\N H P$, then $(g(\psi)_{\N{H\cap G_\theta}P})^{\N H P}=(g(\psi)^{\N G P})_{\N H P}$. Now, let $\beta\in\Irra{p'}{G_\theta/N}$ be such that $\beta\gamma=\psi$ and let $\tau\in\irr G$ extending $\varphi_H$ such that $N\subseteq\ker\tau$ (this extension exists by Lemma \eqref{extensionBrauer}, as $\OO ^{p'}(G)$ is generated by the $p$-elements of $G$). Thus, by Lemma \eqref{restBij} it follows
    
    \[\begin{aligned}
        {}[\chi_H,\varphi_H]&=[\psi_{G_\theta\cap H},\varphi_{G_\theta\cap H}]\\&=[\beta_{G_\theta\cap H}\gamma_{G_\theta\cap H},\varphi_{G_\theta\cap H}]\\&=[\beta_{G_\theta\cap H},\tau_{G_\theta\cap H}]\\
        &=[\beta_{(G_\theta\cap H)N},\tau_{(G_\theta\cap H)N}]\\
        &=[\widetilde{\beta_{(G_\theta\cap H)N}},\widetilde{\tau_{(G_\theta\cap H)N}}]\\
        &=[(\tilde\beta)_{(G_\theta\cap H)N/N},(\tilde\tau)_{(G_\theta\cap H)N/N}]\\
        &=[(\tilde\beta)_{(G_\theta\cap H)N/N},(((\tilde\tau)_{G_\theta/N})^0)_{(G_\theta\cap H)N/N}].\\
        \end{aligned}\]
        Now, since $((\tilde\tau)_{G_\theta/N})^0$ is a linear Brauer character of $G_\theta/N$ it follows that
        \[\begin{aligned}
        {}[\chi_H,\varphi_H]&=[\tilde f_\theta(\tilde\beta)_{\N{G_\theta\cap H}PN/N},(\tilde\tau)_{\N{G_\theta\cap H}PN/N}]\\
        &=[r(\tilde f_\theta(\tilde\beta))_{\N{G_\theta\cap H}PN},\tau_{\N{G_\theta\cap H}PN}]\\
        &=[\hat f_\theta(\beta)_{\N{G_\theta\cap H}P},\tau_{\N{G_\theta\cap H}P}]\\
        &=[\hat f_\theta(\beta)_{\N{G_\theta\cap H}P},\varphi_{\N{G_\theta\cap H}P}]\\
    &=[\hat f_\theta(\beta)_\N{G_\theta\cap H}{P}\gamma_\N{G_\theta\cap H}{P},\varphi_\N{G_\theta\cap H}{P}]\\&=[g(\psi)_\N{G_\theta\cap H}{P},\varphi_\N{G_\theta\cap H}{P}]
        \\&=[(g(\psi)_\N{G_\theta\cap H}{P})^{\N H P},\varphi_\N H P)]\\
        &=[(g(\psi)^{\N G P})_{\N H P},\varphi_{\N H P}]\\&=[f_\theta(\chi)_{\N H P}, \varphi_{\N H P}].
    \end{aligned}\]
    Thus, our bijection $f_\theta:\irrp{G|\theta}\to\irrp{\N GP|\theta}$ satisfies that $d_{\chi\varphi}=d_{f_\theta(\chi)\varphi_{\N G P}}$ for every $\chi\in\irrp G$ and each linear $\varphi\in\ibr G$, as desired.
    
    Now let $\theta_1,\ldots,\theta_n$ be a complete set of representatives of the orbits of $\N G P$ on the linear $P$-invariant characters of $N$. Then, by Lemma \eqref{PinvConst} we may write 
    $$\Irra{p'}{G}=\bigcup_{i=1}^n\Irra{p'}{G|\theta_i}$$ and $$\Irra{p'}{\N G P}=\bigcup_{i=1}^n\Irra{p'}{\N G P|\theta_i}$$ as disjoint unions. Let $\chi\in\Irra{p'}G$. Then there exists a unique $1\leq i\leq n$ such that $\chi\in\Irra{p'}{G|\theta_i}$. We prove that, in fact, $\theta=\theta_i\in\Delta$. Let $\psi\in\irrp{G_\theta|\theta}$ be the Clifford correspondent of $\chi$ over $\theta$. Then $\psi_P$ contains some linear irreducible constituent, which necessarily extends $\theta$.
    
    We define $f(\chi)=f_{\theta_i}(\chi)$. This is a bijection satisfying the condition.
    Hence we may assume that $N=1$, and therefore $K=\OO_{p'}(G)>1$.
    
Let $\Delta$ be the set of linear characters of $K$ which extend to $G$. Let $\theta\in\Delta$ and fix an extension $\gamma\in\irr G$ of $\theta$. Let \[h:\Irra{p'}{G/K}\to\Irra{p'}{G|\theta},\] \[j:\Irra{p'}{\N GP/\N K P}\to\Irra{p'}{\N G P|\theta_{\N K P}}\] be the respective Gallagher correspondences associated to $\gamma$ and $\gamma_{\N G P}$.

As $\N{G/K}{PK/K}=\N GPK/K$, then by induction there exists a bijection $\hat f:\Irra{p'}{G/K}\to\Irra{p'}{\textbf{N}_G(P}K/K)$  such that 
    \[[\beta_{H/K},\varphi_{H/K}]=[\hat f(\beta)_{\N HPK/K},\varphi_{\N HPK/K}]\]
    for every $\beta\in\Irra{p'}{G/K}$ and each linear $\varphi\in\ibr{G/K}$.
    With the usual identifications for $\Irre{G/K}$ and $\Irre{\N G PK/K}$ it follows that for every $\beta\in\Irra{p'}{G/K}$ we have
    \[[\beta_H,\varphi_H]=[\hat f(\beta)_{\N H PK},\varphi_{\N H PK}]=[\hat f(\beta)_\N H P,\varphi_\N H P],\]
    where the last equality holds by Lemma \eqref{restBij}.
    By the same lemma, the restriction $r:\irrp{\N GPK/K}\to\irrp{\N G P/\N K P}$ defines a bijection. We define $f_\theta=h^{-1}\hat frj$.

    By elementary group theory, $\N{KP}P=\textbf{C}_K(P)P$, and hence $\N K P=\textbf{C}_K(P)$. By Lemma \eqref{weakGlaub} the map $\Delta\to\irr{\N K P}$ defined via $\theta\mapsto \theta_{\N K P}$ is injective. Then, the unions
    \[\bigcup_{\theta\in\Delta}\irrp{G|\theta},\]
    \[\bigcup_{\theta\in\Delta}\irrp{\N G P|\theta_{\N K P}}\]
    are disjoint by Clifford's theorem. Therefore, by the $p$-solvable case of the McKay conjecture there exists $f:\irrp G\to\irrp{\N G P}$ such that for every $\theta\in\Delta$ the restriction of $f$ to $\irrp{G|\theta}$ is $f_\theta$. We prove that $f$ satisfies the desired condition.

    Let $\chi\in\irrp G$ and let $\varphi\in\ibr G$ be linear. By Lemma \eqref{extensionBrauer} we may take $\tau\in\irr G$ such that $\tau_H=\varphi_H$. Note that, in particular, $\varphi_K\in\Delta$. By the definition of $f$, $\chi$ lies over $\varphi_K$ if and only if $f(\chi)$ lies over $\varphi_{\N K P}$. Therefore, if $\chi$ does not lie over $\varphi_K$ then
    \[0=[\chi_H,\varphi_H]=[f(\chi)_{\N H P}, \varphi_{\N H P}].\]
    Then we may suppose that $\chi$ lies over $\varphi_K$. By the Gallagher correspondence there is $\alpha\in\irr {G/K}$ linear such that $\tau=\alpha\gamma$ for our prefixed extension $\gamma$ of $\varphi_K$. Also, there exists $\beta\in\irr{G/K}$ such that $\chi=\beta\gamma$. Since $\alpha^0\in\ibr{G/K}$ then
    \[\begin{aligned}
        {}[\chi_H,\varphi_H]&=[\beta_H\gamma_H,\tau_H]\\
        &=[\beta_H\gamma_H,\alpha_H\gamma_H]\\
        &=[\beta_H,(\alpha^0)_H]\\
        &=[(\beta^*)_{\N H P},(\alpha^0)_{\N H P}]\\
        &=[(\beta^*)_{\N H P}\gamma_{\N H P},\alpha_{\N H P}\gamma_{\N H P}]\\
        &=[f(\chi)_{\N H P}, \tau_{\N H P}]\\
        &=[f(\chi)_{\N H P}, \varphi_{\N H P}],
    \end{aligned}\]
    and we are done.
\end{proof}

Finally, we prove Corollary B.

\medskip
\begin{cor} 
  Let $G$ be $p$-solvable and let $\chi \in \irrp G$. Then $d_{\chi 1}$ equals either $1$ or $0$, and
  the number of $\chi \in \irrp G$ with $d_{\chi 1}=1$ is the number of $\norm GP$-orbits on
  $P/P'$, where $P \in \syl p G$.
  \end{cor}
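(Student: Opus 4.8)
The plan is to deduce the statement from Theorem A by specializing to the trivial (hence linear) Brauer character, thereby transferring the entire computation to the group $N := \N G P$, which has a \emph{normal} Sylow $p$-subgroup. Applying Theorem A with the trivial Brauer character $1 \in \ibr G$ produces a bijection $f \colon \irrp G \to \irrp N$ satisfying $d_{\chi 1} = d_{f(\chi)\, 1}$ for every $\chi \in \irrp G$, where on the right $1$ denotes the trivial Brauer character of $N$. Since $f$ is a bijection, it suffices to prove that $d_{\psi 1} \in \{0,1\}$ for each $\psi \in \irrp N$ and that the number of $\psi \in \irrp N$ with $d_{\psi 1} = 1$ equals the number of $N$-orbits on $P/P'$.

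Working inside $N$, which is $p$-solvable with $P \unlhd N$, I would write $N = P \rtimes L$ by Schur--Zassenhaus, with $L$ a $p$-complement. As in the proof of Theorem A (via Problem 2.8 of \cite{N98}), for the linear Brauer character $1$ we have $\Phi_1 = (1_L)^N$, and hence $d_{\psi 1} = [\psi_L, 1_L]$ for every $\psi \in \irr N$. First I would note that every $\psi \in \irrp N$ lies over a \emph{linear} character of $P$: any irreducible constituent $\lambda$ of $\psi_P$ has $\lambda(1)$ a $p$-power dividing the $p'$-number $\psi(1)$, forcing $\lambda(1)=1$. Partitioning $\irrp N = \bigsqcup_{[\lambda]} \irrp{N \mid \lambda}$ over representatives $\lambda$ of the $N$-orbits on $\lin P$ (Clifford's theorem), I would fix such a $\lambda$, let $T = P \rtimes L_\lambda$ be its inertia group (with $L_\lambda = L \cap T$), and use Lemma \ref{extSylow} together with Corollary 6.27 of \cite{Is} to extend $\lambda$ to some $\hat\lambda \in \irr T$. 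By the Gallagher and Clifford correspondences, every $\psi \in \irrp{N \mid \lambda}$ then has the form $\psi = (\beta\hat\lambda)^N$ with $\beta \in \irr{T/P} \cong \irr{L_\lambda}$.

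The decisive step is a Mackey/Frobenius reciprocity computation: since $N = TL$ and $T \cap L = L_\lambda$, one gets $\psi_L = ((\beta\hat\lambda)_{L_\lambda})^L$, whence
\[ d_{\psi 1} = [\psi_L, 1_L] = [\beta_{L_\lambda}\,\hat\lambda_{L_\lambda},\, 1_{L_\lambda}]_{L_\lambda} . \]
Because $\hat\lambda_{L_\lambda}$ is linear, this inner product equals $1$ exactly when $\beta = \overline{\hat\lambda_{L_\lambda}}$ and is $0$ otherwise; thus each orbit $[\lambda]$ contributes precisely one $\psi \in \irrp{N\mid\lambda}$ with $d_{\psi 1} = 1$, all others giving $0$. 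This simultaneously yields $d_{\psi 1} \in \{0,1\}$ and identifies the number of $\psi$ with $d_{\psi 1} = 1$ with the number of $N$-orbits on $\lin P = \irr{P/P'}$. Finally, Brauer's permutation lemma equates the number of $N$-orbits on $\irr{P/P'}$ with the number of $N$-orbits on the abelian group $P/P'$ itself, giving the asserted count. I expect the main obstacle to be the bookkeeping in this last calculation --- specifically, verifying the multiplicity-freeness, i.e.\ that among the Gallagher constituents $\beta\hat\lambda$ lying over a fixed $\lambda$ exactly one restricts to $L_\lambda$ so as to contain the trivial character --- rather than the reduction to $N$, which is immediate from Theorem A.
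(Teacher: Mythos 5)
Your proof is correct and follows essentially the same route as the paper: reduce to $\N G P$ via Theorem A, compute $d_{\psi 1}=[\psi_L,1_L]$ from $\Phi_1=(1_L)^N$, run the Clifford--Gallagher--Mackey analysis over a linear constituent $\lambda$ of $\psi_P$, and finish with Brauer's permutation lemma. The only cosmetic difference is that the paper takes the canonical extension $\hat\lambda$ of $p$-power order, so that $\hat\lambda$ restricts trivially to the $p$-complement of the inertia group and the distinguished character in each orbit is the one with $\beta=1$, whereas you use an arbitrary extension and absorb the resulting twist into $\beta=\overline{\hat\lambda_{L_\lambda}}$.
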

  \begin{proof}
  By Theorem A, we may assume that $P\nor G$. Let
  $H$ be a $p$-complement of $G$. Let $\chi \in \irrp G$, let $\lambda \in \irr P$ be
  under $\chi$.
  Let $U=G_\lambda$ be the stabilizer of $\lambda$ in $G$, and  let $\hat\lambda \in \irr U$
  be the canonical extension of $\lambda$ to $U$ (see Corollary 6.27 of \cite{Is}).
  By the Gallagher correspondence, there is $\alpha \in \irr{U/P}$ such that $\chi=(\hat\lambda \alpha)^G$.
  Using that $o(\hat\lambda)$ is a $p$-power and Mackey's theorem, we have that
  $$d_{\chi 1}=[\chi_H, 1_H]=[\alpha_{U\cap H}, 1_{U\cap H}] \, .$$
  Since $\alpha$ has $P$ in its kernel, by Lemma \ref{restBij}, we have that $\alpha_{U\cap H}$ is irreducible. Therefore $d_{\chi 1}=1$ if and only if $\alpha=1$. Hence, the number of irreducible
  characters of $G$ with $d_{\chi 1}=1$ is in bijection with the number of $\norm GP$-orbits on $\irr{P/P'}$. As $P/P'$ is abelian, by Brauer's lemma on character tables (see Theorem 6.32 of \cite{Is}) it follows that the $\N G P$-actions over $\irr{P/P'}$ and $P/P'$ possess the same permutation character, and hence the number of $\N G P$-orbits on $\irr{P/P'}$ and $P/P'$ is the same.
  \end{proof}


\begin{thebibliography}{ABCD}

  \bibitem[CS24]{CS24}
M. Cabanes, B. Sp\"ath, The McKay Conjecture on character degrees.
  ArXiv: 2410.20392, to appear in \emph{Ann. of Math}.
\bibitem[G25]{G25}
E. Giannelli, McKay bijections and character 
degrees. In preparation.
  \bibitem[GAP]{GAP}
The GAP group, `{\it {\sf GAP} - groups, algorithms, and
programming}', Version 4.11.0, 
2020, \url{http://www.gap-system.org}.
 

 \bibitem[Is]{Is}
I. M. Isaacs, `{\it Character Theory of Finite Groups}',
AMS-Chelsea, Providence, 2006.

\bibitem[IN02]{IN02}
 I. M. Isaacs, G. Navarro, New refinements of the McKay conjecture for
  arbitrary finite groups. \emph{Ann. of Math. (2) \bf 156} (2002), 333--344.
 
\bibitem[N98]{N98}
G. Navarro, `{\it Characters and Blocks Finite Groups}',
Cambridge University Press, 1998.


\bibitem[N04]{N04}
 G. Navarro, The McKay conjecture and Galois automorphisms. \emph{Ann. of
  Math. (2) \bf 160} (2004), 1129--1140.

\bibitem[N18]{N18}
 G. Navarro, \emph{Character Theory and the McKay Conjecture}. Cambridge
  University Press, Cambridge, 2018.

\bibitem[NT]{NT}
G. Navarro, Pham Huu Tiep, Decomposition numbers and local properties,  {\sl J. Algebra}
{\bf 558},  (2020),  620--639

\bibitem[R19]{R19}
N. Rizo,  
Divisibility of degrees in McKay correspondences.  
{\sl Arch. Math.} ({\sl Basel}) {\bf 112} (2019), no. 1, 5--11.

\bibitem[T07]{T07}
A. Turull,  
Degree divisibility in character correspondences.  
{\sl J. Algebra}  {\bf 307} (2007), no. 1, 300--305.

\bibitem[W90]{W90}
T. R. Wolf, Variations on McKay’s character degree conjecture.  {\sl J. Algebra} {\bf 135} (1990) 123–138.

 


\end{thebibliography}
\end{document}